\def\F2{\mathbb{F}_{\hspace{-0.7mm}2}}
\newtheorem{algorithm}{Algorithm}
\def\Sym{\mbox{\rm Sym}}
\def\dim{\mbox{\rm dim}}
\def\FF{{\mathbb F}}
\def\f2{{\mathbb F}_{2}}
\newcommand{\AGL}{\mbox{\rm AGL}}
\newcommand{\GL}{\mbox{\rm GL}}
\newcommand{\ga}{\alpha}
\newcommand{\gd}{\delta}
\newcommand{\gl}{\lambda}
\newcommand{\gk}{\kappa}
\newcommand{\gs}{\sigma}
\newcommand{\gt}{\tau}
\newtheorem{fact}{Fact}
\begin{document}

\title{On differential uniformity of maps that may hide an algebraic trapdoor}


{\author{Marco Calderini \and Massimiliano Sala }} 


\institute{Department of Mathematics,
 University of Trento, Italy\\
  \email{marco.calderini@unitn.it, maxsalacodes@gmail.com }}

\authorrunning{M.~Calderini, M.~Sala}
\maketitle
\begin{abstract}
We investigate some differential properties for permutations in the affine group, of a vector space $V$ over the binary field, with respect to a new group operation $\circ$, inducing an alternative vector space structure on $V$.
\keywords{Trapdoors, Differential uniformity, Block Ciphers, Boolean functions}
\end{abstract}


\section{Introduction}
\label{intro}

Most modern block ciphers are built using components whose cryptographic strength is evaluated in terms of the resistance offered to attacks on the whole cipher. For example, differential properties of Boolean functions are studied for the S-Boxes to thwart differential cryptanalysis (\cite{des,inverse}).

Little is known on similar properties to avoid trapdoors in the design of the block cipher. In  \cite{CGC-cry-art-carantisalaImp} the authors investigate the minimal properties for the S-Boxes (and the mixing layer) of an AES-like cipher (more precisely, a translation-based cipher, or tb cipher) to thwart the trapdoor coming from the imprimitivity action, first noted in \cite{CGC-cry-art-paterson1} .

In \cite{li2003finite}, Li observed that if $V$ is a finite vector space over a finite field $\mathbb{F}_p$, the symmetric group $\Sym(V)$ will contain many isomorphic copies of the affine group $\AGL(V)$, which are its conjugates in $\Sym(V)$. So there are several structures $(V,\circ)$ of a $\mathbb{F}_p$-vector space on the set $V$ , where $(V,\circ)$ is the abelian additive group of the vector space. Each of these structure will yield in general a different copy $\AGL(V, \circ)$ of the affine group within $\Sym(V )$. So, a trapdoor coming from an alternative vector space structure, which we call \emph{hidden sum}, can be embedded in a cipher, whenever the permutation group generated by the round functions of the cipher is contained in a conjugate of $\AGL(V)$. In \cite{CGC-cry-art-carantisalaONan} the authors provide conditions on the S-Boxes of a tb cipher that avoid attacks coming from hidden sums. This result has been generalized to tb ciphers over any field in \cite{aragona2014group}. 
Also, in \cite{anti-crooked}, the authors studied such trapdoors, characterizing a new class of vectorial Boolean functions, which they call \emph{anti-crooked}, able to avoid any hidden sum.

In the yet unpublished Ph.D thesis \cite{phd} the author investigated some properties of affine groups, of a vector space over the binary field, with respect to a hidden sum $\circ$. In particular, he focused on affine groups which contain the translation group with respect to the usual sum $+$, and affine groups whom translation group is contained in $\AGL(V)$. In this paper we study the differential properties of maps which are affine w.r.t. a hidden sum. Our results are presented in Section 3, while in Section 2 we provide some preliminaries from previous works. Our main result, Theorem \ref{th:diff}, concludes Section 3. Section 4 concludes this paper with the sketch of an actual attack to a cipher in which a hidden sum trapdoor is embedded.

\section{Preliminaries}

Here we give some notation and some known results that we are going to use along the paper. In the following, if not specified, $V$ will be an $n$-dimensional vector space over $\F2$.

With the symbol $+$ we refer to the usual sum over the vector space $V$, and we denote by $T_+$, $\AGL(V,+)$ and $\GL(V,+)$, respectively, the translation, affine and linear groups w.r.t. $+$. 

We recall that a $p$-elementary group $G$ acting on a set $\Omega$ is a group of permutations on $\Omega$ such that for all $g$ in $G$ we have $g^p=Id_{\Omega}$.\\
A group $G$ is called regular if for all $a$ and $b$ in $\Omega$ there exists a unique $g$ in $G$ such that $g(a)=b$.
\begin{remark}
An elementary group acting on a vector space $V=\FF_p^n$ is obviously a $p$-elementary group. The translation group of V is an elementary abelian regular group. Vice versa, we claim that if $T$ is an elementary abelian regular group, there exists a vector space structure $(V,\circ)$ such that $T$ is the related translation group. In fact, from the regularity of $T$ we have $T=\{\gt_a\,|\,a\in V\}$ where $\gt_a$ is the unique map in $T$ such that $0\mapsto a$. Then, defining the sum $x\circ a:=\gt_a(x)$, it is easy to check that $(V,\circ)$ is a commutative group, and so we can consider the group operation as a sum, making it an
additive group without loss of generality. Moreover, let the multiplication of a vector by an element of $\FF_p$ defined by 
$$s v:=\underbrace{v\circ\dots\circ v}_{s},\mbox{  for all }s \in \FF_p,$$ then it is easy to check that for all $s,t\in \FF_p$, and $v,w\in V$
$$
s(v\circ w)=s v\circ s w, 
$$
$$
(s+t)v=s v\circ t v,
$$
$$
(st)v=s(t v)\,
$$
and being $T$ $p$-elementary $p v=0$. Thus $(V,\circ)$ is a vector space over $\FF_p$. Observe that $(V,\circ)$ and $(V,+)$ are isomorphic vector space (since $|V|<\infty$).
\end{remark}

For abelian regular subgroups of the affine group in \cite{CGC-alg-art-affreg} the authors give a description of these in terms of commutative associative algebras that one can impose on the vector space $(V,+)$ or, in other words, of products that can be defined on $V$ and distribute the sum $+$. We report the principal result shown in \cite{CGC-alg-art-affreg}. Recall that a (Jacobson) radical ring is a ring $(V,+,\cdot)$ in which every element is invertible with respect to the circle operation $x\circ y =x + y + x\cdot y$, so that $(V,\circ)$ is a group. The circle operation may induce a vector space structure on $V$ or not.

\begin{theorem}\label{th:somme}
Let $\mathbb{F}$ be an arbitrary field, and $(V,+)$ a vector space of arbitrary dimension over $\mathbb{F}$.

There is a one-to-one correspondence between
\begin{itemize}
\item[1)] abelian regular subgroups $T$ of $\AGL(V,+)$, and
\item[2)] commutative, associative $\mathbb{F}$-algebra structures $(V,+,\cdot)$ that one can impose on the vector space structure $(V,+)$, such that the resulting ring is radical.
\end{itemize}

In this correspondence, isomorphism classes of $\mathbb{F}$-algebras correspond to conjugacy classes under the action of $\GL(V,+)$ of abelian regular subgroups of $\AGL(V,+)$.
\end{theorem}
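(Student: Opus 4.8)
The plan is to set up a bijection between the two collections and then check it is well defined in both directions, after which the conjugacy/isomorphism statement follows by tracking how $\GL(V,+)$ acts.

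\medskip

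\noindent\textbf{Constructing the correspondence.}
First I would start from an abelian regular subgroup $T\le\AGL(V,+)$. For each $a\in V$ let $\gt_a\in T$ be the unique element sending $0\mapsto a$ (regularity). Writing $\gt_a(x)=M_a x + a$ with $M_a\in\GL(V,+)$, I would use the group law of $T$ and the fact that $\gt_a\gt_b=\gt_{a\circ b}$ (where $\circ$ is the induced sum from the Remark) to extract a bilinear product. The key computation is to expand $\gt_a\circ\gt_b$ and read off $a\circ b = a + b + a\cdot b$, defining $a\cdot b$ from the linear parts $M_a$; commutativity of $T$ forces $a\cdot b=b\cdot a$, and associativity of composition forces associativity of $\cdot$. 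Bilinearity over $\mathbb{F}$ comes from the requirement $T\subseteq\AGL(V,+)$, i.e.\ that each $M_a$ is $\mathbb{F}$-linear and that $a\mapsto M_a$ is additive. Finally, since every $\gt_a$ is invertible in $T$ with respect to composition, every element is invertible for the circle operation $x\circ y=x+y+x\cdot y$, which is exactly the statement that $(V,+,\cdot)$ is a radical ring.

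\medskip

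\noindent\textbf{The inverse map.}
Conversely, given a commutative associative radical $\mathbb{F}$-algebra $(V,+,\cdot)$, I would define $\gt_a(x):=x+a+a\cdot x=x\circ a$ and set $T=\{\gt_a\mid a\in V\}$. Here I would verify: each $\gt_a$ is an affine map (the linear part is $x\mapsto x+a\cdot x$, which is invertible precisely because the ring is radical); $T$ is closed under composition with $\gt_a\gt_b=\gt_{a\circ b}$, using associativity and distributivity; $T$ is abelian by commutativity of $\cdot$; and $T$ is regular because $\gt_a(0)=a$ runs over all of $V$ and the radical condition guarantees uniqueness. Checking that the two constructions are mutually inverse is then a routine unwinding of definitions.

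\medskip

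\noindent\textbf{Isomorphism versus conjugacy.}
For the last claim I would let $g\in\GL(V,+)$ act by conjugation, $\gt_a\mapsto g\gt_a g^{-1}$, and compute that this sends the product $\cdot$ to the transported product $a*b:=g(g^{-1}a\cdot g^{-1}b)$, so $g$ conjugates $T$ to $T'$ exactly when $g$ is an $\mathbb{F}$-algebra isomorphism between the associated algebras. Hence conjugacy classes of abelian regular subgroups under $\GL(V,+)$ correspond to isomorphism classes of the algebras.

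\medskip

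\noindent The main obstacle I anticipate is the forward direction: showing that the linear parts $M_a$ assemble into a genuine bilinear, associative product rather than just some bookkeeping device. Concretely, one must prove $M_a = \mathrm{Id}+L_a$ where $L_a(x)=a\cdot x$ depends $\mathbb{F}$-linearly on $a$, and that the map $a\mapsto M_a$ is a homomorphism from $(V,\circ)$ into $\GL(V,+)$; extracting associativity of $\cdot$ from associativity of composition, while keeping track of where the radical (invertibility) hypothesis is genuinely used, is the delicate part. The regularity and commutativity conditions translate more directly.
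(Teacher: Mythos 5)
The paper does not actually prove this theorem: it is quoted verbatim from \cite{CGC-alg-art-affreg}, and your outline follows essentially the same route as the proof given there (define $a\circ b=\gt_a(b)$, set $a\cdot b=a\circ b-a-b$, verify the ring axioms, and identify radicality with the existence of $\circ$-inverses). The one point worth sharpening in the step you flag as delicate is that $\mathbb{F}$-linearity of $a\cdot b$ in the variable $a$ should not be obtained by assuming $a\mapsto M_a$ is additive (a priori it is only a homomorphism from $(V,\circ)$ into $\GL(V,+)$, i.e.\ $M_{a\circ b}=M_aM_b$); it follows instead from commutativity of $T$, which gives $a\cdot b=b\cdot a=M_b(a)-a$, manifestly linear in $a$ since $M_b$ is.
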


We recall that an exterior algebra over an $\FF$-vector space $V$ is the $\FF$-algebra whose product is the wedge product $\wedge$ having the following properties:
\begin{itemize}
\item[1)] $x\wedge x=0$ for all $x\in V$,
\item[2)]  $x\wedge y = - y\wedge x$.
\end{itemize}
The elements of the exterior algebra over $V$ are linear combinations of monomials such as $u, v \wedge w, x \wedge y \wedge z$, etc., where $u, v, w, x, y$, and $z$ are vectors of $V$. 

\begin{remark}\label{rm:algebra}
From the theorem above we can note that in characteristic $2$, algebras corresponding to elementary abelian regular subgroups of $\AGL(V,+)$ are exterior algebras or a quotient thereof.
\end{remark}

We will denote by $\gs_a$ the translation in $T_+$ such that $x\mapsto x+a$.  We will use $T_\circ$ and $\AGL(V,\circ)$ to denote the translation and affine group corresponding to a hidden sum $\circ$, that is when $(V,\circ)$ is a vector space and so $T_\circ$ is elementary abelian and regular. \\
As noted in the remark above, since $T_\circ$ is regular, for each $a\in V$ there is a unique map $\gt_a\in T_\circ$ such that $0\mapsto a$. Thus
$$
T_\circ=\{\gt_a\mid a\in V\}.
$$ 
The relation between $T_\circ$ and $\AGL(V,\circ)$ is that $\AGL(V,\circ)$ is the normalizer of $T_\circ$ in $\Sym(V)$, that is $\AGL(V,\circ)$ is the largest subgroup of $\Sym(V)$ contaning $T_\circ$ such that $T_\circ$ is normal in it. Indeed, $\AGL(V,+)$ is the normalizer of $T_+$ and they are, respectively, the isomorphic images of $\AGL(V,\circ)$ and $T_\circ$.
With $1_V$ we will denote the identity map of $V$.

\begin{remark}
If $T_\circ\subseteq \AGL(V,+)$, then $\gt_a=\gs_a\gk$ for some $\gk\in \GL(V,+)$, since $\AGL(V,+)=\GL(V,+)\ltimes T_+$. We will denote by $\gk_a$ the linear map $\gk$ corresponding to $\gt_a$.
\end{remark}

Let $T\subseteq \AGL(V,+)$ and define the set

$$
U(T)=\{a\mid \gt=\gs_a,\gt \in T\}.
$$

It is easy to check that $U(T)$ is a subspace of $V$, whenever $T$ is a subgroup. If $T=T_\circ$ for some operation $\circ$, then $U(T_\circ)$ is not empty for the following lemma.

\begin{lemma}[\cite{CGC-alg-art-affreg}]\label{lm:car}
Let $T_+$ be the group of translation in $\AGL(V,+)$ and let $T\subseteq \AGL(V,+)$ be a regular subgroup. Then, if $V$ is finite $T_+\cap T$ is nontrivial.
\end{lemma}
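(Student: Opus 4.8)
The plan is to read $T\cap T_+$ as the kernel of the ``linear part'' homomorphism and then show that this kernel cannot be trivial. Writing every element of $\AGL(V,+)$ as $x\mapsto\kappa(x)+a$, let $\vf\colon\AGL(V,+)\to\GL(V,+)$ send such a map to its linear part $\kappa$; then $\Ker(\vf)=T_+$, so $\Ker(\vf|_T)=T\cap T_+$. Hence the claim is equivalent to saying that $\vf|_T$ is \emph{not} injective, i.e.\ that the linear parts of the elements of $T$ do not realise $|T|=|V|$ distinct values; in the notation fixed above this is exactly $U(T)\neq\{0\}$. Since $T$ is regular, $|T|=|V|=p^n$ (with $p=\mathrm{char}\,\mathbb{F}$), so $T$ is a finite $p$-group and $P:=\vf(T)$ is a $p$-subgroup of $\GL(V,+)$.

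For the case actually needed in the paper, where $T$ is abelian (in particular $T=T_\circ$, which is elementary abelian), I would invoke Theorem~\ref{th:somme}. It supplies a commutative, associative radical algebra $(V,+,\cdot)$ whose circle group is $(V,\circ)$ and whose translations are $\gt_a\colon x\mapsto x\circ a=x+a+x\cdot a$. Writing $R_a(x)=x\cdot a$, the linear part of $\gt_a$ is $1_V+R_a$, so $\gt_a$ is a \emph{pure} $+$-translation exactly when $R_a=0$, i.e.\ when $a$ annihilates $V$; thus $T\cap T_+=\{\gt_a\mid a\in\mathrm{Ann}(V)\}$ and it suffices to prove $\mathrm{Ann}(V)\neq\{0\}$. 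This is where finiteness is decisive: a finite radical ring is nilpotent (its Jacobson radical, here the whole ring, is nilpotent), so $V^{\,k}=0$ for some minimal $k$; since $V^{1}=V\neq0$ we have $k\geq2$, and then $0\neq V^{\,k-1}\subseteq\mathrm{Ann}(V)$ because $V^{\,k-1}\cdot V=V^{\,k}=0$. Any $0\neq a\in V^{\,k-1}$ gives a nontrivial $\gt_a\in T\cap T_+$, settling the abelian case.

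The remaining difficulty, which I expect to be the main obstacle, is the general regular $T$, where Theorem~\ref{th:somme} does not apply and one has no ring to exhibit an annihilator. The natural replacement for ``$\mathrm{Ann}(V)\neq0$'' is that the $p$-group $P$ has a nonzero fixed vector: it acts on $V\setminus\{0\}$, a set of size $p^n-1$ coprime to $p$, so some orbit is a singleton and $V^{P}\neq\{0\}$. Equivalently, putting $G=TT_+$ (a finite $p$-group, since $T_+\trianglelefteq\AGL(V,+)$), one checks the elementary fact $C_{\AGL(V,+)}(T_+)=T_+$, so that $Z(G)\leq T_+$ and $Z(G)=\{\gs_c\mid c\in V^{P}\}$ is a nontrivial group of translations. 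What remains genuinely delicate is to upgrade such a common fixed vector into a translation \emph{lying in $T$} rather than merely one commuting with $T$. I would attempt this by induction on $\dim V$, passing to the affine action of $G$ on $V/V^{P}$ and lifting a translation produced there; the crux is to ensure the induced group on the quotient is again regular, so that the inductive hypothesis genuinely applies.
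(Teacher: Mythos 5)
The paper itself offers no proof of this lemma---it is quoted from \cite{CGC-alg-art-affreg}---so there is nothing internal to compare your argument against; I can only judge it on its merits. Your reduction to $U(T)\neq\{0\}$ and your treatment of the abelian case are correct and are essentially the argument of the cited source: Theorem~\ref{th:somme} turns $T$ into a commutative radical ring on $V$, a finite radical ring is nilpotent, the last nonzero power $V^{\,k-1}$ lies in the annihilator, and any $0\neq a$ annihilating $V$ gives $\gt_a=\gs_a\in T\cap T_+$. Since every application in this paper takes $T=T_\circ$, which is elementary abelian, your argument already covers everything the authors actually use.

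The ``remaining difficulty'' you flag for non-abelian regular $T$ is not a gap you failed to close: the unqualified statement is false, and no induction on $\dim V$ can rescue it. The hypothesis \emph{abelian} has been dropped in transcription from \cite{CGC-alg-art-affreg}. A concrete counterexample already lives in $\AGL(\mathbb{F}_2^3,+)$: let $U\le\GL(\mathbb{F}_2^3,+)$ be the group of upper unitriangular matrices (isomorphic to the dihedral group of order $8$), generated by $a=I+E_{12}$ and $b=I+E_{23}$ with $E_{ij}$ the matrix units, and let $f\colon U\to\mathbb{F}_2^3$ be the $1$-cocycle determined by $f(a)=(0,0,1)$ and $f(b)=(1,1,0)$; the relators $a^2$, $b^2$, $(ab)^4$ all evaluate to $0$ under $f$, so $f$ is well defined, and a direct computation shows $f$ takes all eight values of $\mathbb{F}_2^3$. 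Hence $T=\{x\mapsto ux+f(u)\mid u\in U\}$ is a regular subgroup of order $8$ meeting $T_+$ trivially, because the linear part of each nonidentity element is some $u\neq 1$. (Non-abelian regular subgroups avoiding all nontrivial translations go back to Heged\H{u}s.) Your $p$-group fixed-point argument correctly produces a nontrivial translation \emph{centralizing} $T$, but, as this example shows, there is in general no way to push it into $T$; the radical-ring route through commutativity is genuinely necessary, and the statement should be read with ``abelian regular subgroup''.
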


$U(T_\circ)$ is important in the context of our theory and its dimension gives fundamental information on the corresponding hidden sum.

\section{On the differential uniformity of a $\circ$-affine map}

Any round function of a translation-based block cipher (Definition $3.1$ \cite{CGC-cry-art-carantisalaImp}) is composed by a parallel s-Box $\gamma$, a mixing layer $\gl$ and a translation $\gs_k$ by the round key. The map $\gamma$ must be as non-linear as possible to create confusion in the message. An important notion of "non-linearity" of Boolean functions is the differential uniformity.

In this section we establish a lower bound on the differential uniformity of the maps lying in some $\AGL(V,\circ)$.
We will consider the two cases of affine group $\AGL(V,\circ)$ such that $T_\circ\subseteq \AGL(V,+)$ and/or $T_+\subseteq \AGL(V,\circ)$. In both cases in the following proofs we can consider w.l.o.g. maps $f$ such that $f(0)=0$. In fact in the first case we can compose $f$ with $\gt_{f(0)}$ that maps $f(0)$ to $0$ and in the second case we compose with $\gs_{f(0)}$, in both cases we compose with an affine map.

We recall the definition of differential uniformity.

\begin{definition}
Let $m,n\ge 1$. Let $f:\F2^m\to\mathbb{F}_2^n$, for any $a\in\F2^m$ and $b\in \F2^n$ we define
$$
\gd_f(a,b)=|\{x\in\F2^m \,|\,f(x+a)+f(x)=b\}|.
$$
The \emph{differential uniformity} of $f$ is 
$$
\gd(f)=\max_{\substack{a \in\F2^m,\, b\in \F2^n\\
a\neq 0}}\gd_f(a,b).
$$
$f$ is said $\gd$-\emph{differential uniform} if $\gd=\gd(f)$.
\end{definition}

We are ready for our first result.
\begin{lemma}\label{lm:diff2}
Let $T_\circ\subseteq \AGL(V,+)$ and $\dim(U(T_\circ))=k$. Then $f\in \AGL(V,\circ)$ is at least $2^k$ differentially uniform.
\end{lemma}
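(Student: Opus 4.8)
The plan is to reduce to the case $f(0)=0$ and then exhibit a single pair $(a,b)$ with $a\neq 0$ for which $\gd_f(a,b)\ge 2^k$. As already noted before the statement, composing $f$ on the left with the $\circ$-translation $\gt_{f(0)}$ produces a map $g$ with $g(0)=0$; since $T_\circ\subseteq\AGL(V,+)$ this $\gt_{f(0)}$ is $+$-affine, and left composition with a $+$-affine map leaves $\gd$ unchanged (it merely relabels the output difference $b$ by the inverse of its linear part). Hence I may assume $f(0)=0$, so that $f\in\GL(V,\circ)$, i.e. $f(x\circ y)=f(x)\circ f(y)$ for all $x,y$.

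Next I would choose any $a\in U(T_\circ)\setminus\{0\}$ (possible as soon as $k\ge 1$; for $k=0$ the bound $2^0$ is trivial) and set $b=f(a)$. Because $a\in U(T_\circ)$ we have $\gt_a=\gs_a$, hence $x+a=x\circ a$ for every $x$, and $\circ$-linearity gives
$$
f(x+a)=f(x\circ a)=f(x)\circ f(a)=\gk_{f(a)}(f(x))+f(a).
$$
Therefore $f(x+a)+f(x)=(\gk_{f(a)}+1_V)(f(x))+f(a)$, so the equation $f(x+a)+f(x)=f(a)$ is equivalent to $(\gk_{f(a)}+1_V)(f(x))=0$. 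Since $f$ is a bijection, the number of solutions $x$ equals $|\Ker(\gk_{f(a)}+1_V)|=|\{y\mid \gk_{f(a)}(y)=y\}|$, which is a power of $2$.

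It then remains to bound this fixed-space from below, and this is the crux of the argument: I claim that $\gk_c$ fixes $U(T_\circ)$ pointwise for every $c\in V$. Indeed, for $u\in U(T_\circ)$ we have $\gt_u=\gs_u$, so, using commutativity of $\circ$, $c\circ u=\gt_u(c)=\gs_u(c)=c+u$; combined with the defining relation $u\circ c=\gk_c(u)+c$ this forces $\gk_c(u)=u$. Hence $U(T_\circ)\subseteq\Ker(\gk_{f(a)}+1_V)$, whence $\dim\Ker(\gk_{f(a)}+1_V)\ge k$ and $\gd_f(a,f(a))\ge 2^k$, giving $\gd(f)\ge 2^k$. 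I expect the only delicate points to be verifying that the passage to $g$ does not change $\gd$ and the pointwise-fixing claim $\gk_c|_{U(T_\circ)}=1_V$; once these are in place, the rest is just the count of solutions of a homogeneous linear equation, which automatically yields a power of $2$ that is at least $2^k$.
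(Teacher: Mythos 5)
Your proof is correct and follows essentially the same route as the paper's: pick $a\in U(T_\circ)\setminus\{0\}$ so that $x+a=x\circ a$, use $\circ$-linearity of $f$ (after normalizing $f(0)=0$), and observe that on $U(T_\circ)$ the operations $\circ$ and $+$ agree, yielding at least $2^k$ solutions of $f(x+a)+f(x)=f(a)$. Your reformulation via the fixed space of $\gk_{f(a)}$ is just a rephrasing of the paper's computation (with the small bonus that it exhibits the solution set as the preimage of a subspace), so no substantive difference.
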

\begin{proof} 
Let $a\in U(T_\circ)$, then
$$
f(x+a)+f(x)=f(x\circ a)+f(x)=(f(x)\circ f(a))+f(x).
$$

\noindent So, for all $f(x) \in U(T_\circ)$ we have
$$
(f(x)\circ f(a))+f(x)=(f(x)+f(a))+f(x)=f(a),
$$
that implies $|\{x \,|\, f(x+a)+f(x)=f(a)\}|\ge 2^k$.
\end{proof}

When $T_+\subseteq \AGL(V,\circ)$, we can define $U_\circ(T_+)=\{a\mid\gs_a\in T_+\cap T_\circ\}$ and it is a vector subspace of $(V,\circ)$. Then we obtain, analogously, the following lemma.

\begin{lemma}\label{lm:diff3}
Let $T_+\subseteq \AGL(V,\circ)$ and $\dim(U_\circ(T_+))=k$, as a subspace of $(V,\circ)$. Then $f\in \AGL(V,\circ)$ is at least $2^k$ differentially uniform.
\end{lemma}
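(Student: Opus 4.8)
The plan is to mirror the proof of Lemma~\ref{lm:diff2}, but carrying out the computation inside the operation $\circ$ rather than $+$, since the roles of the two sums are now exchanged: the hypothesis $T_+\subseteq\AGL(V,\circ)$ means that $+$ is the ``well-behaved'' translation for the affine structure $(V,\circ)$, exactly as $\circ$ was in the previous lemma. Concretely, I would fix $a\in U_\circ(T_+)$, so that by definition $\gs_a\in T_+\cap T_\circ$, i.e.\ the $+$-translation by $a$ coincides with the $\circ$-translation $\gt_a$; equivalently $x\circ a = x + a$ for \emph{every} $x\in V$. This is the key structural fact that makes the swap work, and it is the analogue of the identity used silently in the first lemma.

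First I would rewrite the differential quantity $\gd_f(a,b)$ in terms of $\circ$. Since $f\in\AGL(V,\circ)$ and (w.l.o.g.) $f(0)=0$, the map $f$ is $\circ$-linear, so $f(x\circ a)=f(x)\circ f(a)$. Using $x\circ a=x+a$ for $a\in U_\circ(T_+)$, I get
$$
f(x+a)+f(x)=f(x\circ a)+f(x)=(f(x)\circ f(a))+f(x).
$$
Next I would restrict attention to those $x$ for which $f(x)\in U_\circ(T_+)$ as well. For such an $x$, the term $f(x)\circ f(a)$ simplifies: because $f(x)\in U_\circ(T_+)$ means $\gs_{f(x)}\in T_\circ$, we have $z\circ f(x)=z+f(x)$ for all $z$, hence $f(x)\circ f(a)=f(a)\circ f(x)=f(a)+f(x)$ (using commutativity of $\circ$). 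Substituting back gives
$$
(f(x)\circ f(a))+f(x)=(f(a)+f(x))+f(x)=f(a),
$$
so every such $x$ is a solution of $f(x+a)+f(x)=f(a)$.

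It then remains to count these solutions. The condition is $f(x)\in U_\circ(T_+)$; since $f$ is a bijection and $U_\circ(T_+)$ is a subspace of dimension $k$, the set $f^{-1}(U_\circ(T_+))$ has exactly $2^k$ elements. Therefore $\gd_f(a,f(a))\ge 2^k$, and taking the maximum over nonzero $a$ and all $b$ yields $\gd(f)\ge 2^k$, as claimed. The main subtlety to get right is the bookkeeping of which sum appears where: I must be careful that $a$ lies in $U_\circ(T_+)$ so that $\circ$-translation by $a$ equals $+$-translation, and that $f(x)$ also lies in $U_\circ(T_+)$ so that $\circ$ collapses to $+$ on the second factor. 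Once the defining property $\gs_a\in T_+\cap T_\circ \Leftrightarrow (z\circ a=z+a\ \forall z)$ is pinned down, the rest is the same two-line collapse as in Lemma~\ref{lm:diff2}; no genuine obstacle arises beyond confirming that $U_\circ(T_+)$ is indeed a $\circ$-subspace of the stated dimension, which is asserted in the remark preceding the statement.
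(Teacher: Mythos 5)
Your proof is correct and is precisely the argument the paper intends: the paper omits the proof of this lemma, stating only that it follows ``analogously'' to Lemma~\ref{lm:diff2}, and your adaptation (using $\gs_a=\gt_a$ for $a\in U_\circ(T_+)$ to convert $x+a$ into $x\circ a$, then collapsing $f(x)\circ f(a)$ to $f(x)+f(a)$ when $f(x)\in U_\circ(T_+)$, and counting via bijectivity of $f$) is exactly that analogue. No discrepancy with the paper's approach.
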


Recalling that given a ring $R$, $r\in R$ is called nilpotent if there exists an integer $n$ such that $r^n=0$, while $r\in R$ is called 
 unipotent if and only if $r-1$ is nilpotent, we have the following:

\begin{lemma}\label{lm:uni}
Let $T_\circ\subseteq \AGL(V,+)$. Then for each $a\in V$, $\gk_a$ has order $2$ and it is unipotent.
\end{lemma}
\begin{proof}
We know that $\gt_a$ has order $2$, because $T_\circ$ is elementary. Then, $\gt_a^2=1_V$ implies $\gt_a(a)=0$, and in particular $\gk_a(a)=a$. So 
$$
x= \gt_a^2(x)=\gk_a(\gk_a(x)+a)+a=\gk_a^2(x)+a+a=\gk_a^2(x)\quad \mbox{ for all }x \in V.
$$
That implies $(\gk_a-1_V)^2=\gk_a^2-1_V=0$.
\end{proof}

\begin{remark}
The lemma above can be easily generalized to any characteristic $p$, in this case the order of $\gk_a$ would be $p$.
\end{remark}

\begin{remark}
It is well known that a square matrix is unipotent if and only if its characteristic polynomial $P(t)$ is a power of $t -1$, i.e. it has a unique eigenvalue equals to $1$.
\end{remark}

We recall the following definition.

\begin{definition}
Let $A$ be an $n\times n$ matrix over a field $\mathbb{F}$, with $\gl\in \mathbb{F}$ along the main diagonal and 1 along the diagonal above it, that is
$$
A=\left[\begin{array}{ccccc}
						\gl & 1 &            &\dots & 0\\
						0 & \gl        &1  &\dots & 0\\
					\vdots&  & &   & \vdots \\
						
						0 & \dots &           &     &   \gl \end{array}\right].
$$
Then $A$ is called the $n\times n$ elementary Jordan matrix or Jordan block of size $n$.
\end{definition}

\begin{definition}
A matrix $A$ defined over a field $\mathbb{F}$ is said to be in Jordan canonical form if $A$ is block-diagonal where each block is a Jordan block defined over $\mathbb{F}$.
\end{definition}

The following theorem is well-known (see for instance \cite{lang}).
\begin{theorem}\label{th:jordan}
Let $A$ be an $n \times n $ matrix over a field $\mathbb{F}$ such that any eigenvalue of $A$ is contained in $\mathbb{F}$, then there exists a matrix $J$ defined over $\mathbb{F}$, which is in Jordan canonical form and similar to $A$.
\end{theorem}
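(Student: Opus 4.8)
The plan is to reduce the statement to the case of a nilpotent operator and then to build a Jordan basis by hand, keeping everything over $\mathbb{F}$. First I would view $A$ as the matrix of a linear operator on $V=\mathbb{F}^n$ and use the hypothesis that every eigenvalue lies in $\mathbb{F}$: this forces the characteristic polynomial to split completely, say $P(t)=\prod_{i=1}^{r}(t-\lambda_i)^{m_i}$ with the $\lambda_i\in\mathbb{F}$ distinct. Setting $W_i=\ker\big((A-\lambda_i 1_V)^{m_i}\big)$, the coprimality of the factors $(t-\lambda_i)^{m_i}$ together with a coprimality (B\'ezout) / primary-decomposition argument yields the direct sum $V=\bigoplus_{i=1}^{r}W_i$ into $A$-invariant subspaces, on each of which $N_i:=(A-\lambda_i 1_V)|_{W_i}$ is nilpotent. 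Crucially, the splitting hypothesis means no field extension is ever required, which is exactly the content of ``defined over $\mathbb{F}$'' in the statement.

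The core step is to put a nilpotent operator into Jordan form. Given nilpotent $N$ on a space $W$ with nilpotency index $s$, I would study the ascending filtration $\{0\}\subseteq\ker N\subseteq\ker N^2\subseteq\cdots\subseteq\ker N^{s}=W$ and construct a basis of \emph{Jordan chains}, i.e.\ sequences $v,Nv,\dots,N^{\,l-1}v$ with $N^{\,l}v=0$. Concretely, one chooses vectors in $\ker N^{s}$ whose classes form a basis of $\ker N^{s}/\ker N^{s-1}$, applies $N$ to push them one level down, completes to a basis of the quotient $\ker N^{s-1}/\ker N^{s-2}$, and iterates. In the basis so obtained, $N$ is block-diagonal with each block an elementary nilpotent Jordan block (a Jordan block with $\lambda=0$).

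Finally I would reassemble. On each $W_i$ we have $A|_{W_i}=\lambda_i 1_{W_i}+N_i$; applying the nilpotent step to $N_i$ gives a basis of $W_i$ in which $N_i$ is a direct sum of nilpotent Jordan blocks, and adding the scalar part $\lambda_i 1_{W_i}$ merely replaces the zero diagonal by $\lambda_i$, producing genuine Jordan blocks with eigenvalue $\lambda_i$. Concatenating these bases over all $i$ furnishes a basis of $V$; if $P$ denotes the resulting invertible change-of-basis matrix (with entries in $\mathbb{F}$), then $J=P^{-1}AP$ is in Jordan canonical form, is defined over $\mathbb{F}$, and is similar to $A$, as required.

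The hard part will be the nilpotent step: proving that the vectors produced level by level stay linearly independent and span $W$, equivalently that the number and sizes of the blocks are correctly controlled by the jumps $\dim(\ker N^{j})-\dim(\ker N^{j-1})$. The delicate bookkeeping is to check that pushing a chosen independent set down by $N$ remains independent and can always be completed at the next level without collisions with the lower kernels; once this is established, the splitting hypothesis makes the reduction to the nilpotent case and the reassembly entirely automatic over $\mathbb{F}$.
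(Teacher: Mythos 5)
The paper does not actually prove this statement: it labels it ``well-known'' and refers the reader to Lang's \emph{Linear Algebra}, so there is no internal argument to compare against. Your outline is the standard textbook proof (and essentially the one in Lang): the hypothesis that all eigenvalues lie in $\mathbb{F}$ makes the characteristic polynomial split, the primary decomposition $V=\bigoplus_i \ker\bigl((A-\lambda_i 1_V)^{m_i}\bigr)$ follows from B\'ezout applied to the pairwise coprime factors, and the problem reduces to the nilpotent case, all without leaving $\mathbb{F}$. The step you defer --- that pushing an independent set down one level of the kernel filtration preserves independence --- is not a real obstruction; it is closed by the observation that if $v_1,\dots,v_k\in\ker N^{j}$ are independent modulo $\ker N^{j-1}$ and $\sum_i a_i Nv_i\in\ker N^{j-2}$, then $N^{j-1}\bigl(\sum_i a_i v_i\bigr)=N^{j-2}\bigl(\sum_i a_i Nv_i\bigr)=0$, so $\sum_i a_i v_i\in\ker N^{j-1}$ and all $a_i=0$; iterating this and counting dimensions via the jumps $\dim\ker N^{j}-\dim\ker N^{j-1}$ shows the chains assemble into a basis. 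With that lemma supplied, your argument is complete and correct. Note also that for the use made of this theorem in Lemma \ref{lm:fixpoint} one only ever needs the unipotent case over $\f2$ (a single eigenvalue $1$, i.e.\ the nilpotent step applied to $\gk_a-1_V$), so your reduction already covers more than the paper requires.
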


\begin{lemma}\label{lm:fixpoint}
Let  $T_\circ\subseteq \AGL(V,+)$. Then for each $a \in V$, $\gk_a$ fixes at least $2^{\lfloor \frac{n-1}{2} \rfloor +1}$ elements of $V$.
\end{lemma}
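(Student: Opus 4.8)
The plan is to reduce the statement to a question about the Jordan canonical form of $\gk_a$ over $\F2$. By Lemma~\ref{lm:uni} we already know that $\gk_a$ is unipotent and satisfies $(\gk_a-1_V)^2=0$; equivalently, its minimal polynomial divides $(t-1)^2$. Since the only eigenvalue of $\gk_a$ is $1\in\F2$, Theorem~\ref{th:jordan} guarantees that $\gk_a$ is similar over $\F2$ to a matrix $J$ in Jordan canonical form, all of whose blocks carry the eigenvalue $1$. Because the minimal polynomial is $(t-1)$ or $(t-1)^2$, the size of the largest Jordan block is at most $2$, so every block of $J$ is a Jordan block of size $1$ or $2$.

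Next I would count the fixed points. The set of vectors fixed by $\gk_a$ is precisely $\Ker(\gk_a-1_V)$, an $\F2$-subspace of $V$ whose dimension is invariant under conjugation and hence equals the dimension of the fixed space of $J$. For a single Jordan block with eigenvalue $1$, the kernel of (block $-\,I$) is one-dimensional, irrespective of the block size, so the fixed space of $J$ has dimension equal to the total number of Jordan blocks. Writing $b_1$ and $b_2$ for the number of blocks of size $1$ and $2$ respectively, we have $b_1+2b_2=n$, and the dimension of the fixed space is $b_1+b_2=n-b_2$.

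Finally I would minimise this dimension over the admissible block structures. Since $b_2$ blocks of size $2$ occupy $2b_2\le n$ coordinates, we have $b_2\le\lfloor n/2\rfloor$, whence
$$
\dim\Ker(\gk_a-1_V)=n-b_2\ge n-\lfloor n/2\rfloor=\lceil n/2\rceil=\textstyle\lfloor\frac{n-1}{2}\rfloor+1.
$$
As the fixed points form a subspace, their number is $2^{\dim\Ker(\gk_a-1_V)}\ge 2^{\lfloor (n-1)/2\rfloor+1}$, which is the claimed bound. The only point requiring care—and the part I expect to be the main obstacle—is justifying that the block sizes are bounded by $2$ and that each block contributes exactly one dimension to the fixed space; both follow cleanly from $(\gk_a-1_V)^2=0$, and it is precisely the $2$-nilpotency of $\gk_a-1_V$ (not merely unipotency of $\gk_a$) that forces the block sizes down and thereby yields the stated exponent.
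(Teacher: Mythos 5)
Your proof is correct and follows essentially the same route as the paper: both reduce to the Jordan canonical form of $\gk_a$ over $\F2$ and use $(\gk_a-1_V)^2=0$ to bound the block structure, then count. The paper phrases the block-size restriction as ``no two consecutive superdiagonal entries equal $1$'' and counts fixed canonical basis vectors, while you count Jordan blocks and compute $\dim\Ker(\gk_a-1_V)=n-b_2$ directly, but the substance is identical.
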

\begin{proof}
From Lemma \ref{lm:uni}, $\gk_a$  has a unique eigenvalue equals to $1 \in \f2$, then from Theorem \ref{th:jordan} there exists a matrix over $\f2$ in the Jordan form similar to $\gk_a$. Thus, $\gk_a=AJA^{-1}$, for some $A,J\in GL(V,+)$ with

$$
J=\left[\begin{array}{ccccc}
						1 & \ga_1 &            &\dots & 0\\
						0 & 1        & \ga_2  &\dots & 0\\
					\vdots&  & &   & \vdots \\
						0 & \dots &           &  1 & \ga_{n-1}\\
						0 & \dots &           &     &   1\end{array}\right] \mbox{ and } J^2=\left[\begin{array}{cccccc}
						1 &  0 &     \ga_1 \ga_2      &   &\dots & 0\\
						0 & 1    &           0               & \ga_2\ga_3 & \dots & 0\\
					        \vdots&  & &   &   & \vdots \\
					        0 & \dots &         &  1 &  0 & \ga_{n-2}\ga_{n-1} \\
						0 & \dots &         &  &  1 & 0\\
						0 & \dots &       &    &     &   1\end{array}\right].
$$
where $\ga_i\in \mathbb{F}_2$ for $1\le i\le n-1$.

\noindent From the fact that $J$ is conjugated to $\gk_a$ we have $J^2=1_V$, and that implies $\ga_i\ga_{i+1}=0$ for all $1\le i\le n-2$.

\noindent Note that if $\ga_i=1 $ then $\ga_{i-1}$ and $\ga_{i+1}$ have to be equal to $0$. Thus we have that when $n$ is even at most $\frac{n}{2}$ $\ga_i$'s can be equal to $1$. Then at least $\frac{ n}{ 2}$ elements of the canonical basis are fixed by $J$. When $n$ is odd we have at most $\frac{n-1}{2}$ $\ga_i$'s equal to $1$ and then at least $\frac{n-1}{2}+1$ elements of the canonical basis are fixed by $J$.
Our claim follows from the fact that $\gk_a$ is conjugated to $J$.
\end{proof}

In terms of algebras we have the following corollary.

\begin{corollary}\label{lm:rewrited}
Let $T_\circ \subseteq \AGL(V,+)$, and let $(V,+,\cdot)$ be the associated algebra of Theorem \ref{th:somme}. Then for each $a \in V$, $a\cdot x$ is equal to $0$ for at least $2^{\lfloor \frac{n-1}{2}\rfloor +1}$ elements $x$ of $ V$.
\end{corollary}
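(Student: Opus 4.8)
The plan is to reduce the counting statement about the algebra product to the already-established count of fixed points of the linear map $\gk_a$ in Lemma \ref{lm:fixpoint}. The bridge is the observation that, under the correspondence of Theorem \ref{th:somme}, the condition $a\cdot x=0$ is exactly the condition that $x$ be a fixed point of $\gk_a$. Once this identification is made, the corollary follows immediately by applying Lemma \ref{lm:fixpoint}.

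First I would recall that, by Theorem \ref{th:somme}, the hidden sum is the circle operation of the radical ring $(V,+,\cdot)$, so that $x\circ a = x + a + x\cdot a$, and hence $\gt_a(x)=x\circ a = x + a + x\cdot a$. On the other hand, the decomposition $\gt_a=\gs_a\gk_a$ recorded in the preliminaries gives $\gt_a(x)=\gk_a(x)+a$. Equating the two expressions yields $\gk_a(x)+a = x + a + a\cdot x$ (using commutativity of the product, $x\cdot a = a\cdot x$), and therefore
$$
\gk_a(x)=x + a\cdot x \quad \mbox{ for all } x\in V.
$$
This is the key formula; I would present it as a short lemma-style computation or inline, since it is the only real content.

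From this formula it is clear that $\gk_a$ fixes $x$ if and only if $x + a\cdot x = x$, i.e. if and only if $a\cdot x = 0$. Thus the set $\{x\in V \mid a\cdot x = 0\}$ coincides with the fixed-point set of $\gk_a$, and Lemma \ref{lm:fixpoint} guarantees that this set has at least $2^{\lfloor (n-1)/2\rfloor +1}$ elements, proving the claim.

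I do not expect a genuine obstacle here: the corollary is a direct translation of Lemma \ref{lm:fixpoint} into the language of the associated algebra. The only point requiring care is getting the correspondence right, namely verifying that the product $x\cdot a$ appearing in the circle operation is precisely the one giving $\gk_a = 1_V + (a\cdot{-})$, with the correct order of composition in $\gt_a=\gs_a\gk_a$; once the sign/order bookkeeping (trivial in characteristic $2$) is settled, the rest is purely formal.
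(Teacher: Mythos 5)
Your proposal is correct and follows exactly the route the paper intends: the corollary is stated without proof as the algebra-language restatement of Lemma \ref{lm:fixpoint}, via the identity $\gk_a(x)=x+a\cdot x$ obtained by comparing $\gt_a(x)=\gk_a(x)+a$ with $\gt_a(x)=x\circ a=x+a+x\cdot a$. Your careful derivation of that identity is the only content needed, and it is right.
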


\begin{remark}\label{rk:dim7}
The bound on the number of elements fixed by $\gk_a$ given in Lemma \ref{lm:fixpoint} is tight. In fact let $(V,+,\cdot)$ be the exterior algebra over a vector space of dimension three, spanned by $e_1,e_2,e_3$. That is, $V$ has basis
$$
e_1,e_2,e_3,e_1\wedge e_2, e_1\wedge e_3, e_2\wedge e_3, e_1\wedge e_2 \wedge e_3.
$$
We have that $e_1\cdot x =0$ for all $x \in E=\langle e_1,e_1\wedge e_2, e_1\wedge e_3,  e_1\wedge e_2 \wedge e_3 \rangle$. So, for all $x \in E$

$$
x\circ e_1=x+e_1+x\cdot e_1=x+e_1.
$$
Vice versa if $x\circ e_1=x+e_1$ then $x \in E$. The size of $E$ is $2^4$. 
\end{remark}

\begin{lemma}\label{lm:diff6}
Let $T_\circ\subseteq \AGL(V,+)$. Then $f\in \AGL(V,\circ)$ is at least $2^{\lfloor \frac{n-1}{2}\rfloor +1}$ differentially uniform.
\end{lemma}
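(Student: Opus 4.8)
The plan is to reduce this statement to the fixed-point count already established in Lemma~\ref{lm:fixpoint}, by running the same differential computation as in Lemma~\ref{lm:diff2} but replacing the special role of $U(T_\circ)$ with the larger set of fixed points of a suitable linear map. Since $f\in\AGL(V,\circ)$, we may as in the preamble of this section assume $f(0)=0$, so that $f$ is in fact a $\circ$-linear map, i.e. $f(x\circ y)=f(x)\circ f(y)$ for all $x,y\in V$. The key identity to exploit is that, for any $a\in V$, the ordinary difference $f(x+a)+f(x)$ can be rewritten using $x+a=x\circ(\text{something})$ together with the circle formula $u\circ w=u+w+u\cdot w$ from Theorem~\ref{th:somme}, so that the $+$-difference at direction $a$ collapses whenever a certain product vanishes.

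First I would fix a nonzero $a$ and try to identify a large set of inputs $x$ for which $f(x+a)+f(x)$ takes one and the same value. The natural candidate, in light of Lemma~\ref{lm:fixpoint} and Corollary~\ref{lm:rewrited}, is the set $\{x\mid a\cdot x=0\}$, which has size at least $2^{\lfloor\frac{n-1}{2}\rfloor+1}$. On this set $x+a=x\circ a$, because $x\circ a=x+a+x\cdot a=x+a$ exactly when $x\cdot a=0$ (using commutativity of the algebra). Hence for such $x$ we get
$$
f(x+a)+f(x)=f(x\circ a)+f(x)=(f(x)\circ f(a))+f(x),
$$
using that $f$ is $\circ$-linear. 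The remaining task is to show that the right-hand side is constant (independent of $x$) across a subset of this fixed-point set that is still of size at least $2^{\lfloor\frac{n-1}{2}\rfloor+1}$.

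The main obstacle is exactly this constancy: unlike in Lemma~\ref{lm:diff2}, where the extra hypothesis $f(x)\in U(T_\circ)$ forced $f(x)\circ f(a)=f(x)+f(a)$ and made the difference equal the constant $f(a)$, here we only know $x\cdot a=0$, not $f(x)\cdot f(a)=0$. To handle this I would push the analysis through $f$: since $f$ is $\circ$-linear and $0\mapsto 0$, it is a $\circ$-linear isomorphism, so $f$ maps the $\circ$-subspace $W_a:=\{x\mid x\cdot a=0\}=\{x\mid x\circ a=x+a\}$ (this is a $\circ$-subspace and contains a large $+$-flat by Lemma~\ref{lm:fixpoint}) onto a $\circ$-subspace of the same $\circ$-dimension, and I would track where the defect $f(x)\cdot f(a)$ lands. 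Concretely, writing $b:=f(a)$ and using $\circ$-linearity, the difference equals $f(x)+(f(x)\circ b)=b+f(x)\cdot b$, so the count we need becomes the number of $x\in W_a$ for which $f(x)\cdot b$ is a fixed value; after the bijection $f$ this is the number of $y$ in the image $\circ$-subspace $f(W_a)$ with $y\cdot b$ fixed, which is controlled by the fixed-point/annihilator bound of Lemma~\ref{lm:fixpoint} applied to $\gk_{b}$ (equivalently Corollary~\ref{lm:rewrited} applied to the element $b$).

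I would therefore finish by combining two instances of the $2^{\lfloor\frac{n-1}{2}\rfloor+1}$ bound: the annihilator of $a$ on the source side and the annihilator of $b=f(a)$ on the target side, intersecting them through the isomorphism $f$, and arguing that the intersection is still large enough. If a clean intersection argument is awkward, the cheaper route—and probably the one I would present—is to observe that it suffices to exhibit any single pair $(a,b)$ with $\gd_f(a,b)\ge 2^{\lfloor\frac{n-1}{2}\rfloor+1}$, so I may choose $a$ so that $f(a)$ lies in a maximal annihilator direction, or appeal directly to the $+$-translation structure: the whole argument only needs one value $b$ attained $2^{\lfloor\frac{n-1}{2}\rfloor+1}$ times at some nonzero $a$, which the annihilator count guarantees. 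The delicate point to get right is the bookkeeping showing that the map $x\mapsto f(x)\cdot f(a)$ is constant on a set of the required size, and that is where I expect to spend the real effort.
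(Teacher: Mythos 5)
There is a genuine gap, and it is exactly the one you flag at the end as ``the delicate point.'' Your route fixes an arbitrary nonzero $a$ and works on $W_a=\{x\mid a\cdot x=0\}$, where $x+a=x\circ a$; but then the difference equals $f(a)+f(x)\cdot f(a)$, so you need the \emph{second} condition $f(x)\cdot f(a)=0$ to hold on a large subset of $W_a$. Each of the two sets involved --- the annihilator of $a$ on the source side and $f^{-1}$ of the annihilator of $f(a)$ on the target side --- has size at least $2^{\lfloor\frac{n-1}{2}\rfloor+1}$, but for $n$ even this bound is only $2^{n/2}$, and two subsets (even subspaces) of that size in $\F2^n$ can intersect in as little as one element. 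So the intersection argument you sketch does not yield the claimed count, and neither of your fallback suggestions (choosing $a$ with $f(a)$ in a ``maximal annihilator direction,'' or a vague appeal to the $+$-translation structure) is made concrete enough to close this.

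The missing ingredient is Lemma~\ref{lm:car}: since $V$ is finite, $T_+\cap T_\circ$ is nontrivial, i.e.\ $U(T_\circ)\neq\{0\}$. The paper picks a nonzero $a\in U(T_\circ)$, for which $x+a=x\circ a$ holds for \emph{every} $x\in V$, not just on an annihilator. This eliminates the source-side condition entirely, leaving only the requirement $f(a)\cdot f(x)=0$; since $f$ is a bijection, Corollary~\ref{lm:rewrited} applied to the element $f(a)$ gives at least $2^{\lfloor\frac{n-1}{2}\rfloor+1}$ such $x$, and for all of them $f(x+a)+f(x)=f(a)$. So the correct strategy is the opposite of yours: keep $a$ special (in $U(T_\circ)$, as in Lemma~\ref{lm:diff2}) and relax the condition on $x$ from ``$f(x)\in U(T_\circ)$'' to ``$f(x)$ annihilates $f(a)$,'' rather than relaxing the choice of $a$ and paying for it with two simultaneous constraints.
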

\begin{proof}
From Lemma \ref{lm:car} there exists $a \in U(T_\circ)$ different from zero. So
$$
f(x+a)+f(x)=f(x\circ a)+f(x)=(f(x)\circ f(a)) + f(x)=
$$
$$
(f(x)+f(a)+f(a)\cdot f(x))+f(x)
$$

\noindent Now, from Corollary \ref{lm:rewrited} we have that $f(a)\cdot f(x) =0$ for at least $2^{\lfloor \frac{n-1}{2}\rfloor +1}$ elements of $V$.

\noindent This implies $|\{x \,|\, f(x+a)+f(x)=f(a)\}|\ge 2^{\lfloor \frac{n-1}{2}\rfloor +1}$.

\end{proof}

\begin{lemma}\label{lm:diff7}
Let $T_+\subseteq \AGL(V,\circ)$. Then $f\in \AGL(V,\circ)$ is at least $2^{\lfloor \frac{n-1}{2}\rfloor +1}$ differentially uniform.
\end{lemma}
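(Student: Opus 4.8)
The plan is to exploit the complete symmetry between $+$ and $\circ$. The hypothesis $T_+\subseteq\AGL(V,\circ)$ is exactly the hypothesis of Lemma~\ref{lm:diff6} with the two vector space structures interchanged, so the whole machinery developed for $T_\circ\subseteq\AGL(V,+)$ transfers after swapping $+$ and $\circ$. Concretely, $T_+$ is an elementary abelian regular subgroup of $\AGL(V,\circ)$, hence by Theorem~\ref{th:somme} applied over the vector space $(V,\circ)$ it corresponds to a commutative, associative, radical $\F2$-algebra $(V,\circ,*)$ whose circle operation recovers the usual sum, i.e. $p+q=p\circ q\circ(p*q)$ for all $p,q\in V$. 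Since $(V,\circ)$ has characteristic $2$, Remark~\ref{rm:algebra} tells us this algebra is exterior or a quotient thereof, so $u*u=0$ for every $u$. Moreover, the symmetric versions of Lemma~\ref{lm:fixpoint} and Corollary~\ref{lm:rewrited} hold: for each $c\in V$ one has $c*u=0$ for at least $2^{\lfloor\frac{n-1}{2}\rfloor+1}$ elements $u\in V$.

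Next I would single out a good difference $a$. By Lemma~\ref{lm:car}, applied with the roles of $+$ and $\circ$ exchanged, $T_\circ\cap T_+$ is nontrivial, so there is $a\neq 0$ with $\gs_a=\gt_a$, i.e. $a\in U_\circ(T_+)$; for such an $a$ we have $x+a=x\circ a$ for every $x\in V$. As explained at the start of the section, I may assume $f(0)=0$ (replacing $f$ by $\gs_{f(0)}f\in\AGL(V,\circ)$, which leaves every $\gd_f(a,b)$ unchanged), so that $f$ is $\circ$-linear.

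The core computation is then a short algebraic simplification. Writing $u=f(x)$ and $c=f(a)$, the $\circ$-linearity of $f$ and the identity $x+a=x\circ a$ give
$$
f(x+a)+f(x)=(f(x)\circ f(a))+f(x)=(u\circ c)+u.
$$
Expanding the outer $+$ via $p+q=p\circ q\circ(p*q)$, and using $u\circ u=0$ together with $u*u=0$ and the distributivity of $*$ over $\circ$, this collapses to $(u\circ c)+u=c\circ(c*u)$, which equals $c=f(a)$ precisely when $c*u=0$. By the symmetric version of Corollary~\ref{lm:rewrited} there are at least $2^{\lfloor\frac{n-1}{2}\rfloor+1}$ elements $u$ with $c*u=0$, and since $f$ is a bijection these correspond to exactly as many $x$. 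Hence $\gd_f(a,f(a))\ge 2^{\lfloor\frac{n-1}{2}\rfloor+1}$, which proves the claim.

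The main obstacle I anticipate is making the symmetry argument airtight rather than merely plausible: one must check that Theorem~\ref{th:somme} and Remark~\ref{rm:algebra} genuinely apply with $\circ$ as the base operation (they do, since $(V,\circ)$ is just another $\F2$-vector space), and that the circle operation of the resulting algebra is the original $+$, with $c*u=0$ characterizing the fixed points of the associated $\circ$-linear map. Once this dictionary is fixed, the remaining steps are the same routine characteristic-$2$ manipulations used in Lemma~\ref{lm:diff6}.
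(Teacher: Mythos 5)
Your proof is correct and follows essentially the same route as the paper's: both invoke the $+$/$\circ$ symmetry to transfer Theorem~\ref{th:somme}, Lemma~\ref{lm:car} and Corollary~\ref{lm:rewrited}, pick a nonzero $a$ with $x+a=x\circ a$ for all $x$, and expand $f(x+a)+f(x)$ in the algebra $(V,\circ,\cdot)$ satisfying $x+y=x\circ y\circ (x\cdot y)$, using $y\cdot y=0$ to reduce everything to counting the elements annihilated by $f(a)$. Your write-up is if anything slightly more careful than the paper's, since you make explicit both the reduction to $f(0)=0$ and the use of the bijectivity of $f$ to pass from the count of $u=f(x)$ to the count of $x$.
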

\begin{proof}
Note that Theorem \ref{th:somme}, Lemma \ref{lm:car} and Corollary \ref{lm:rewrited} hold also inverting the operation $\circ$ and $+$. Then, there exists $a \in V$ different from zero such that $x+a=x\circ a $ for all $x\in V$. Considering the algebra $(V,\circ,\cdot)$ such that $x+y=x\circ y\circ x\cdot y$ for all $x,y\in V$, we have
$$
f(x+a)+f(x)=f(x\circ a)+f(x)=(f(x)\circ f(a)) + f(x)=
$$
$$
(f(x)\circ f(a)) \circ f(x)\circ f(x)\cdot (f(x) \circ f(a))=
$$
$$
f(x)\circ f(a) \circ f(x)\circ f(x)\cdot f(x) \circ f(x)\cdot f(a).
$$

\noindent From Remark \ref{rm:algebra}, we have $y^2=0$ for all $y \in V$, and from Corollary \ref{lm:rewrited} $f(x)\cdot f(a)=0$ for at least $2^{\lfloor \frac{n-1}{2}\rfloor +1}$ elements. Thus
$$|\{x \,|\, f(x+a)+f(x)=f(a)\}|\ge 2^{\lfloor \frac{n-1}{2}\rfloor +1}.$$
\end{proof}

Summarizing our results in this section, especially Lemma \ref{lm:diff2}, \ref{lm:diff3}, \ref{lm:diff6}, \ref{lm:diff7}, we obtain
our theorem on the claimed differentiability.

\begin{theorem}\label{th:diff}
Let $T_\circ\subseteq \AGL(V,+)$ ($T_+\subseteq \AGL(V,\circ)$, respectively). Let $f\in \AGL(V,\circ)$. Then $\gd(f)\ge 2^m$, where \\
\begin{itemize}
\item $m=\max\{{\lfloor \frac{n-1}{2}\rfloor +1},\dim (U(T_\circ))\}$
\item  ($m=\max\{{\lfloor \frac{n-1}{2}\rfloor +1},\dim (U_\circ(T_+))\}$, respectively).
\end{itemize}
\end{theorem}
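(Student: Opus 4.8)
The plan is to treat this statement as a packaging of the four differential lemmas already established, so that the only genuine work is to justify that their bounds can be read off a single normalized map and then combined by taking a maximum. First I would record the standard fact that the differential uniformity $\gd$ is invariant under pre- and post-composition with permutations that are affine with respect to $+$: if $g=A\circ f\circ B$ with $A(y)=My+c$ and $B(x)=Nx+d$ linear-plus-translation for $+$, then $g(x+a)+g(x)=M\bigl(f(Nx+d+Na)+f(Nx+d)\bigr)$, and letting $x$ run over $V$ turns this into a relabelling of the defining set of $\gd_f(Na,M^{-1}b)$; hence $\gd(g)=\gd(f)$. This legitimizes the reduction announced at the start of the section: when $T_\circ\subseteq\AGL(V,+)$ the $\circ$-translation $\gt_{f(0)}$ is itself $+$-affine, so replacing $f$ by $\gt_{f(0)}\circ f$ produces a map fixing $0$ (hence $\circ$-linear, which is exactly what the lemma computations $f(x\circ a)=f(x)\circ f(a)$ require) with the same $\gd$; symmetrically one uses $\gs_{f(0)}$ when $T_+\subseteq\AGL(V,\circ)$. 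From now on I assume $f(0)=0$.

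For the first case I would simply invoke the two lower bounds already proved for such $f$. Lemma \ref{lm:diff2} exhibits, for every $a\in U(T_\circ)$, at least $2^{k}$ solutions of $f(x+a)+f(x)=f(a)$ with $k=\dim(U(T_\circ))$, whence $\gd(f)\ge 2^{\dim(U(T_\circ))}$. Independently, Lemma \ref{lm:car} supplies a nonzero $a\in U(T_\circ)$ and Lemma \ref{lm:diff6} then produces at least $2^{\lfloor (n-1)/2\rfloor+1}$ solutions of $f(x+a)+f(x)=f(a)$, so $\gd(f)\ge 2^{\lfloor (n-1)/2\rfloor+1}$. Since $\gd(f)$ dominates each individual value $\gd_f(a,b)$, it dominates both bounds, giving
$$
\gd(f)\ \ge\ \max\bigl\{2^{\lfloor (n-1)/2\rfloor+1},\,2^{\dim(U(T_\circ))}\bigr\}\ =\ 2^{m},\qquad m=\max\bigl\{\lfloor (n-1)/2\rfloor+1,\ \dim(U(T_\circ))\bigr\}.
$$

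The second case is entirely parallel: with the roles of $+$ and $\circ$ interchanged, Lemma \ref{lm:diff3} gives $\gd(f)\ge 2^{\dim(U_\circ(T_+))}$ and Lemma \ref{lm:diff7} gives $\gd(f)\ge 2^{\lfloor (n-1)/2\rfloor+1}$, and the same maximum argument yields $\gd(f)\ge 2^{m}$ with $m=\max\{\lfloor (n-1)/2\rfloor+1,\dim(U_\circ(T_+))\}$. I do not expect a genuine obstacle, since all the analytic content lives in the cited lemmas; the one point deserving care is the combination step, namely that the two witnesses $a$ furnished by the two lemmas need not coincide. This causes no difficulty, because $\gd(f)$ is by definition the maximum of $\gd_f(a,b)$ over all admissible pairs, so a lower bound obtained from any single witness—and hence the larger of two such lower bounds—is automatically a lower bound for $\gd(f)$.
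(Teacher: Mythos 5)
Your proposal is correct and matches the paper's own treatment: the paper proves Theorem \ref{th:diff} simply by combining Lemmas \ref{lm:diff2}, \ref{lm:diff3}, \ref{lm:diff6} and \ref{lm:diff7} via the maximum, after the same normalization $f(0)=0$ announced at the start of Section 3. Your added justification that $\gd$ is invariant under $+$-affine pre/post-composition is a detail the paper leaves implicit, but the route is the same.
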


By a computer check we obtain the following fact.
\begin{fact}\label{fact}
Let $V=\F2^n$ with $n=3,4,5$. If $T_+\subseteq \AGL(V,\circ)$, let $f\in \AGL(V,\circ)$. Then $\gd(f)\ge 2^{n-1}$.
\end{fact}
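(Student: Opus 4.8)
The plan is to reduce the statement, one hidden sum at a time, to a small purely linear-algebraic check and then to carry that check out by computer, exactly as the phrase ``computer check'' suggests.

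First I would normalise $f$. Since $T_+\subseteq\AGL(V,\circ)$, the translation $\gs_{f(0)}$ lies in $\AGL(V,\circ)$, and composing $f$ with it preserves both membership in $\AGL(V,\circ)$ and the value of $\gd$, because $\gd$ is invariant under composition with $+$-affine permutations. Hence we may assume $f(0)=0$, so $f$ is $\circ$-linear, i.e. $f\in\GL(V,\circ)$. Working in the algebra $(V,\circ,\cdot)$ associated to $\circ$ as in the proof of Lemma \ref{lm:diff7}, I would then isolate the exact local count already implicit there: for $a\in U_\circ(T_+)$ one has $x\cdot a=0$ for all $x$, and repeating the computation of Lemma \ref{lm:diff7} (using $y^2=0$ from Remark \ref{rm:algebra} and $y\circ y=0$) collapses to the clean identity
\[
f(x+a)+f(x)=f(a)\circ\bigl(f(a)\cdot f(x)\bigr).
\]
Thus $f(x+a)+f(x)=f(a)$ holds precisely when $f(a)\cdot f(x)=0$, and since $f$ is a bijection,
\[
\gd_f\bigl(a,f(a)\bigr)=\bigl|\{\,y\mid f(a)\cdot y=0\,\}\bigr|=|\mathrm{Ann}(f(a))|,
\]
the annihilator of $f(a)$ in $(V,\circ,\cdot)$. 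Consequently $\gd(f)\ge\max_{a\in U_\circ(T_+)\setminus\{0\}}|\mathrm{Ann}(f(a))|$.

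This turns the whole assertion into a question about one hidden sum at a time. Writing $G=\{\,c\in V\mid |\mathrm{Ann}(c)|\ge 2^{n-1}\,\}$, equivalently the set of $c$ for which the $\circ$-linear map $y\mapsto c\cdot y$ has rank at most $1$, and letting $d=\dim U_\circ(T_+)$ (as a subspace of $(V,\circ)$), the desired bound $\gd(f)\ge 2^{n-1}$ holds for every $f$ as soon as, for every $f$, the subspace $f\bigl(U_\circ(T_+)\bigr)$ meets $G\setminus\{0\}$. Since $\GL(V,\circ)$ acts as the full linear group on $(V,\circ)$ and is transitive on $d$-dimensional $\circ$-subspaces, $f\bigl(U_\circ(T_+)\bigr)$ ranges over \emph{all} $d$-dimensional subspaces. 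Hence, for each hidden sum, the Fact reduces to the finite claim: every $d$-dimensional subspace of $(V,\circ)$ contains a nonzero element of $G$; equivalently, the largest subspace all of whose nonzero elements lie outside $G$ has dimension strictly less than $d$. Once the algebra is written down this quantity is immediate to compute.

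What remains is the enumeration. By the correspondence of Theorem \ref{th:somme} with the roles of $+$ and $\circ$ exchanged (as in Lemma \ref{lm:diff7}), the hidden sums with $T_+\subseteq\AGL(V,\circ)$ are in bijection with commutative associative nilpotent $\F2$-algebra structures $(V,\circ,\cdot)$, which in characteristic $2$ automatically satisfy $y^2=0$ (Remark \ref{rm:algebra}); up to $\GL(V,\circ)$-conjugacy there are only finitely many for each $n=3,4,5$, obtained by solving the commutativity, associativity and nilpotency constraints on the structure constants up to change of basis. For each representative I would compute $d$, the function $c\mapsto\mathrm{rank}(y\mapsto c\cdot y)$, the set $G$, and the maximal dimension of a subspace avoiding $G\setminus\{0\}$, and confirm it is smaller than $d$.

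I expect the main obstacle to be exactly the hidden sums with small $d$, in particular $d=1$: there the reduction degenerates into requiring $G=V$, i.e. that \emph{every} nonzero element have annihilator of size at least $2^{n-1}$, and this can genuinely fail. Already in higher dimension, as Remark \ref{rk:dim7} shows, annihilators can shrink to $2^{n-3}$, so the clean pair $(a,f(a))$ need not certify the bound. For those finitely many exceptional algebras the verification must be completed by computing $\gd(f)$ directly; this stays feasible because $\gd$ is invariant under the $+$-affine equivalence $f\mapsto LfM$ with $L,M\in\GL(V,+)$, so one inspects only double-coset representatives rather than all of $\GL(V,\circ)$. This same shrinking of annihilators is precisely what makes the value $2^{n-1}$ special to small $n$, and it explains why the statement is limited to $n\le 5$ and established by computer rather than by a single uniform argument.
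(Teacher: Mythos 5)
The paper offers no argument for Fact~\ref{fact} at all: it is stated as the outcome of an unspecified ``computer check''. Your proposal is therefore not competing with a written proof but supplying a verification procedure, and as such it is sound. The algebraic core is correct: for $a\in U_\circ(T_+)$ and $\circ$-linear $f$, the computation of Lemma~\ref{lm:diff7} does collapse (using $y\circ y=0$, $y\cdot y=0$ from Remark~\ref{rm:algebra}, and distributivity of $\cdot$ over $\circ$) to $f(x+a)+f(x)=f(a)\circ\bigl(f(a)\cdot f(x)\bigr)$, giving the exact count $\gd_f(a,f(a))=|\{y\mid f(a)\cdot y=0\}|$; and since $\GL(V,\circ)$ is transitive on $d$-dimensional $\circ$-subspaces, the sufficient condition ``every $d$-dimensional subspace meets $G\setminus\{0\}$'' is the right uniform criterion, with $G$ the set of elements whose multiplication map has rank at most $1$. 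What your approach buys over the paper's bare assertion is a drastic reduction of the search: for most algebras the Fact follows from a rank computation on structure constants rather than from enumerating all of $\AGL(V,\circ)$. You are also right, and it is essential that you say so, that this reduction can fail (small $d$, or annihilators of codimension $\ge 2$ as in Remark~\ref{rk:dim7}), so the fallback of computing $\gd(f)$ directly for the exceptional algebras is what makes the procedure complete; without it the argument would only prove a weaker statement. Two small cautions for the implementation: the enumeration must cover hidden sums up to $\GL(V,+)$-conjugacy (which preserves $\gd$ and the containment $T_+\subseteq\AGL(V,\circ)$), not merely abstract algebra isomorphism detached from the identification with $(\F2^n,+)$; and in the fallback the equivalence $f\mapsto LfM$ with $L,M\in\GL(V,+)$ does not preserve membership in $\AGL(V,\circ)$, so one should either restrict $L,M$ to $\GL(V,+)\cap\AGL(V,\circ)$ or take care that the chosen representatives still cover all of $\AGL(V,\circ)$. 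Neither issue undermines the plan; for $n\le 5$ even the unoptimized exhaustive check is feasible.
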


\begin{remark}
For $n=7,8$ there exist examples of functions that are affine w.r.t. a hidden sum $\circ$ satisfying $T_+\subseteq \AGL(V,\circ)$ and $\gd(f)=2^{n-2}$. The existence of these permutations and Fact \ref{fact} suggest  that
probably there may exist bounds which are sharper than those in Theorem \ref{th:diff}.
\end{remark}

\begin{remark}
Note that if we consider $f\in \Sym(\F2^4)$ with $\gd(f)=4$ then the parallel map $(f,f)$ acting on $\F2^8$ is $2^6$ differentially uniform. Thus the differential uniformity may not guarantee, alone, security from a hidden sum trapdoor!
\end{remark}

\section{A block cipher with a hidden sum}
\label{sec:4}

In this section we give an example, similar to that described in \cite{anti-crooked}, of a translation based block cipher in a small dimension, in which it is possible to embed a hidden-sum trapdoor.

Let $m=3$, $n=2$, then $d=6$ and we have the message space $V=\mathbb{F}_2^6$.
The mixing layer of our toy cipher is given by the matrix
$$
\gl=\left[\begin{array}{cccccc}
0& 0 &1 &1 &1 &0\\
0& 0& 1& 0& 1& 1\\
0& 0& 0& 0& 0& 1\\
1&0& 1& 0& 0& 1\\
1& 1& 1& 0& 0& 1\\
0& 0& 1& 0& 0& 0\end{array}\right]
$$

Note that $\gl$ is a proper mixing layer (see Definition $3.2$ \cite{aragona2014group}).
The bricklayer transformation $\gamma=(\gamma_1,\gamma_2)$ of our toy cipher is given by two identical S-boxes
$$
\gamma_1=\gamma_2=\ga^4x^6 + \ga^3 x^4 + \ga x^3 + \ga^3x^2 +  x + \ga^6
$$
where $\ga$ is a primitive element of $\mathbb{F}_{2^3}$ such that $\ga^3=\ga+1$.\\
The S-box $\gamma_1$ is $4$-differential uniform.

%
%
%
%
%
%
%
%
%
%
%
Consider the hidden sum $\circ$ over $V_1=V_2=(\mathbb{F}_2)^3$ induced by the elementary abelian regular group $T_\circ=\langle \tau_1,\tau_2,\tau_3\rangle$, where
\begin{equation}\label{eq:generatori}
\tau_1(x)=x\cdot\left[\begin{array}{ccc}
1& 0& 0\\
0 &1 &1\\
0 &0& 1\end{array}\right]+e_1,\;
\tau_2(x)=x\cdot\left[\begin{array}{ccc}
1& 0& 1\\
0& 1 &0\\
0& 0& 1\end{array}\right]+e_2,\;
\tau_3(x)=x\cdot\left[\begin{array}{ccc}
1& 0& 0\\
0 &1 &0\\
0 &0& 1\end{array}\right]+e_3,
\end{equation}
with $e_1=(1,0,0)$, $e_2=(0,1,0)$ and $e_3=(0,0,1)$. In other words, $\tau_i(x)=x\circ e_i$ for any $1\leq i\leq 3$.\\
Obviously $T=T_\circ\times T_\circ$ is an elementary abelian group inducing the hidden sum $(x_1,x_2)\circ'(y_1,y_2)=(x_1\circ y_1,x_2\circ y_2)$ on $V=V_1\times V_2$. By a computer check it results $\langle T_+, \gl \gamma\rangle \subseteq \AGL(V,\circ')$,
and $\circ'$ is a hidden sum for our toy cipher. It remains to verify whether it is possible to use it to attack the toy cipher with an attack that costs less than brute force. 
 We are considering a cipher where the number of rounds is so large to make any classical attack useless (such as differential cryptanalysis) and the key scheduling offer no weakness.  Therefore, the hidden sum will  actually be essential to break the cipher only if the attack that we build will cost significantly less than $64$ encryptions, considering that the key space is $\mathbb{F}_2^6$.
\begin{remark}
$T_\circ$ is generated by the translations corresponding to $e_1,e_2$ and $e_3$, which implies that the vectors $e_1,e_2,e_3$ form a basis for $(V_1,\circ)$. Let $x=(x_1,x_2,x_3)\in V_1$, from \eqref{eq:generatori} we can simply write
 {\small
$$
 \tau_1(x)=(x_1+1,x_2,x_2+x_3),\tau_2(x)=(x_1,x_2+1,x_1+x_3),\tau_3(x)=(x_1,x_2,x_3+1).
 $$}
 \noindent Let us write $x$ as a linear combination of $e_1$, $e_2$ and $e_3$ w.r.t. to the sum $\circ$, i.e. $x=\gl_1 e_1\circ \gl_2 e_2 \circ \gl_3 e_3$. We have that $\lambda_1=x_1$, $\lambda_2=x_2$ and $\lambda_3=\lambda_1\lambda_2+x_3$. So
 \begin{equation}\label{eq:comb}
 (x_1,x_2,x_3)=x=(\gl_1,\gl_2,\gl_1\gl_2+\gl_3)
 \end{equation}
\end{remark}

Thanks to the previous remark we can find the coefficients of  a vector $v'=(v,u)\in V$ with respect to $\circ'$ by using the following algorithm separately on the two bricks of $v'$.
\begin{algorithm}\label{alg:1}
\ \\
{\bf Input:} vector $x\in \mathbb{F}_2^3$\\
{\bf Output:} coefficients $\gl_1$, $\gl_2$ and $\gl_3$.\\
$[1]$ $\lambda_1\leftarrow x_1$;\\
$[2]$ $\lambda_2\leftarrow x_2$;\\
$[3]$ $\gl_3\leftarrow\gl_1\gl_2+x_3$;\\
return $\gl_1,\gl_2,\gl_3$.
\end{algorithm}

Let $v'=(v,u)\in V$, we write 
$$v=\gl_1^v  e_1\circ \gl_2^v e_2 \circ \gl_3^v e_3\mbox{ and }u=\gl_1^u  e_1\circ \gl_2^u e_2 \circ \gl_3^u e_3.
$$ 
We denote by 
$$
[v']=[\gl_1^{v},\gl_2^{v},\gl_3^{v},\gl_1^{u},\gl_2^{u},\gl_3^{u}]
$$
 the vector with the coefficients obtained from the bricks of $v'$ using Algorithm \ref{alg:1}. 
 
Let $\varphi=\varphi_k$ be the encryption function, with a given unknown session key $k$. We want to mount two attacks by computing the matrix $M$ and the translation vector $t$ defining $\varphi\in\AGL(V,\circ')$, so $t=\varphi(0)$ and  $[\varphi(x)]=[x]\cdot M+[t]$.\\
Assume we can call the encryption oracle. Then $M$ can be computed from the $7$ ciphertexts $\varphi(0),\varphi(e_1'),\dots,\varphi(e_6')$ (where $e_1'=(1,0,0,0,0,0),\dots,e_6'=(0,0,0,0,0,1)$), since the $([\varphi(e_i')]+[t])$'s represent the matrix rows. In other words, we will have 
 $$
 [\varphi(v')]=[v']\cdot M+[t],\quad [\varphi^{-1}(v')]=([v']+[t])\cdot M^{-1},
 $$
for all $v'\in V$, where the product row by column is the standard scalar product. The knowledge of  $M$, $t$ and $M^{-1}$ provides a global deduction (reconstruction), since it becomes trivial to encrypt and decrypt. In fact, to encrypt $v$ it is enough to compute $[v]$, applying $[v]\mapsto [v]\cdot M+[t]=[w]$ and then pass from $[w]$ to the standard representation $w$ via \eqref{eq:comb}. Analogously to decrypt. However, following \cite{anti-crooked}, we have an alternative depending on how we
compute $M^{-1}$, resulting in one attack with $7$ encryptions
and another with $7$ encryptions and $7$ decryptions. Both are much faster
than brute-force searching in the keyspace.


\end{document}